\newtheorem{theorem}{Theorem}[section]
\newtheorem{lemma}[theorem]{Lemma}
\newtheorem{corollary}[theorem]{Corollary}
\theoremstyle{definition}
\title{Univalence and holomorphic extension \\ of the solution to \\ $\omega$-controlled Loewner--Kufarev equations}
\author[a]{Takafumi Amaba}
\author[b]{Roland Friedrich}
\author[c]{Takuya Murayama}
\affil[a]{Fukuoka University, 8-19-1 Nanakuma, J\^onan-ku, Fukuoka, 814-0180, Japan. \texttt{fmamaba@fukuoka-u.ac.jp}}
\affil[b]{Saarland University, Faculty of Mathematics, D-66123 Saarbr\"ucken, Germany. \texttt{friedrich@math.uni-sb.de}}
\affil[c]{Kyoto University, Department of Mathematics, Kyoto 606-8502, Japan. JSPS Research Fellow. \texttt{murayama@math.kyoto-u.ac.jp}}
\date{}
\begin{document}

\maketitle

\begin{abstract}
We prove that a solution to the $\omega$-controlled Loewner--Kufarev equation, which was introduced by the first two authors, exists uniquely, is univalent
and starlike
on the unit disk and can be extended holomorphically across the unit circle.

%Key words
\noindent
Keywords.
controlled Loewner--Kufarev equation, control function, univalence, holomorphic extension

%MSC
\noindent
2010 Mathematics Subject Classification.
Primary 93C20; Secondary 30C99, 35C10, 35Q99
\end{abstract}

\section{Introduction}
\label{sec:intro}

In various fields of mathematics, univalent functions play important roles.
Not just are they fundamental objects in geometric function theory and in Teichm\"uller theory but also have deep connections with conformal field theory, integrable systems and even with random matrices.
The second author~\cite{Fr10} gave a concise picture of such connections in terms of the \emph{Schramm--Loewner evolution} and \emph{(Sato--)Segal--Wilson Grassmannian}, and then Markina and Vasil'ev~\cite{MV10, MV16} proposed an extension of his approach introducing the \emph{alternate Loewner--Kufarev equation}.

In order to generalize the above results further, the first two authors introduced the notion of \emph{controlled Loewner--Kufarev equations}
\[
df_t(z) = z f_t'(z) \, \{ dx_0(t) + d\xi(\mathbf{x}, z)_t \}, \quad t \in [0, T],\ f_0(z) = z \in \mathbb{D},
\]
in their previous paper~\cite{AF18}.
Here $\mathbf{x} = (x_1, x_2, \ldots)$, $\xi(\mathbf{x}, z)_t = \sum_{n=1}^{\infty} x_n(t) z^n$, and $x_n(t)$, $n \geq 0$, are complex-valued continuous functions of bounded variation.
As is described in \cite[Section~3.2]{AF18},
the solution $(f_t)_{0 \leq t \leq T}$ is embedded into the Segal--Wilson Grassmannian through Krichever's construction if it is univalent on the unit disk $\mathbb{D}$ and extends holomorphically across $\partial \mathbb{D}$.
Moreover, the same authors gave a sufficient condition~\cite[Theorem~3.3]{AF19} for the embedded solution to be continuous as a curve in the Grassmannian.
In that theorem, they consider the case in which the driving functions $x_0$ and $\mathbf{x}$ are \emph{controlled by a control function} $\omega \colon \{\, (s,t) \mid 0 \leq s \leq t \leq T \,\} \to \mathbb{R}_{+}$ \cite[Definition~3.2]{AF19} which satisfies $\omega(0, T) < 1/8$.

It should be noted that, in the paper~\cite{AF19}, it is a priori assumed that $f_t$ is univalent on $\mathbb{D}$ and extends to a holomorphic function on an open neighbourhood of $\overline{\mathbb{D}}$.
However, we expect that this property is intrinsic to a large class of controlled Loewner--Kufarev equations.
The purpose of the present article is to confirm this hypothesis in the $\omega$-controlled case.
The goal is the following:

\begin{theorem}
\label{thm:main}
Let $\alpha$ be the smallest real solution to the quartic equation
$2x^{4} - 8x^{3} + 11 x^{2} - 10 x + 2 = 0$
{\rm (}then $\alpha$ is positive{\rm )},
and suppose that
$
\omega(0, T) <
\alpha/2
$.
Then a holomorphic solution $(f_t)_{0 \leq t \leq T}$ to the $\omega$-controlled Loewner--Kufarev equation exists uniquely.
Moreover, $f_t$ is univalent
and starlike
on $\mathbb{D}$
and extends to a holomorphic function on an open neighbourhood of $\overline{\mathbb{D}}$ for each $t \in [0, T]$.
\end{theorem}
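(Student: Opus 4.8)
The plan is to pass from the functional equation to the triangular system satisfied by the Taylor coefficients of $f_t$, integrate that system explicitly, and then read off every analytic property from quantitative bounds on the coefficients. Writing $f_t(z)=\sum_{k\ge 1}a_k(t)z^k$ and matching coefficients of $z^m$ in $df_t=zf_t'\{dx_0+d\xi\}$ gives
\begin{equation}
da_m(t)=m\,a_m(t)\,dx_0(t)+\sum_{k=1}^{m-1}k\,a_k(t)\,dx_{m-k}(t),\qquad a_m(0)=\delta_{m,1}.
\label{eq:coeff-rec}
\end{equation}
The decisive feature is that \eqref{eq:coeff-rec} is \emph{triangular}: the $m$-th equation couples $a_m$ only to the lower coefficients $a_1,\dots,a_{m-1}$, apart from its own diagonal term $m\,a_m\,dx_0$. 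Since the drivers are continuous and of bounded variation, the ordinary product rule holds for Riemann--Stieltjes integration (there is no quadratic-variation correction), so the integrating factor $e^{-m x_0(t)}$ converts \eqref{eq:coeff-rec} into
\begin{equation}
a_1(t)=e^{x_0(t)},\qquad a_m(t)=e^{m x_0(t)}\int_0^t e^{-m x_0(s)}\sum_{k=1}^{m-1}k\,a_k(s)\,dx_{m-k}(s)\quad(m\ge 2),
\label{eq:coeff-sol}
\end{equation}
after normalising $x_0(0)=0$. This yields existence and uniqueness at once: \eqref{eq:coeff-sol} determines the coefficients one after another, and any solution must obey the same recursion, so the formal solution is unique. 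It remains to show the series converges past the unit circle and to establish univalence.

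\textbf{Holomorphic extension.} This is where the constant $\alpha/2$ enters, and I expect it to be the main obstacle. Setting $\tilde a_m=e^{-m x_0}a_m$ and $\tilde A_m=\sup_{t\le T}|\tilde a_m(t)|$, the control hypothesis (which bounds the weighted total variations $\sum_{n\ge1}\rho^n\int_0^T|dx_n|$ and $\int_0^T|dx_0|$ in terms of $\omega(0,T)=:\omega_0$) turns \eqref{eq:coeff-sol} into a convolution inequality $\tilde A_m\le\sum_{k=1}^{m-1}k\,\tilde A_k\,\gamma_{m-k}$ with $\gamma_j=e^{j\omega_0}\int_0^T|dx_j|$. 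Introducing the majorant generating function $\Phi(z)=\sum_m\tilde A_m z^m$ and $\Gamma(z)=\sum_j\gamma_j z^j$, this says $\Phi=z+\bigl(z\Phi'\bigr)\Gamma$, a first-order linear ODE whose radius of convergence is governed by the first singularity of its integrating factor. The plan is to show that, after optimising the free weight $\rho$ against the loss incurred by the operator $z\partial_z$ (the factor $k$ above), this radius exceeds $1$ uniformly in $t$ precisely when $\omega_0<\alpha/2$; the quartic $2x^{4}-8x^{3}+11x^{2}-10x+2=0$ is exactly the condition pinning down where the optimised majorant crosses radius $1$. Carrying out this bookkeeping so that the threshold lands on the smallest root $\alpha$ is the delicate quantitative step, and it simultaneously certifies that $f_t$ extends holomorphically to a fixed disk $\{|z|<R\}$ with $R>1$.

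\textbf{Univalence and starlikeness.} With the coefficient bounds in hand, $f_t\to z$ uniformly on $\overline{\mathbb D}$ as $\omega_0\to 0$, so by Hurwitz's theorem $f_t$ has a single simple zero at the origin; hence $h_t:=z f_t'/f_t$ is holomorphic on a neighbourhood of $\overline{\mathbb D}$ with $h_0\equiv 1$. Differentiating the relation $d\log f_t=h_t\,dV$ in $z$ and using $zh'/h=1+zf''/f'-h$, I obtain the transport equation
\begin{equation}
dh_t(z)=z\,h_t'(z)\,\bigl(dx_0(t)+d\xi(\mathbf x,z)_t\bigr)+h_t(z)\sum_{n\ge 1}n\,dx_n(t)\,z^n.
\label{eq:h-evol}
\end{equation}
Setting $g_t=h_t-1$, equation \eqref{eq:h-evol} becomes an inhomogeneous linear Stieltjes equation for $g_t$ whose forcing term $\sum_{n\ge1}n\,x_n(t)z^n$ and whose coefficients are all controlled by $\omega_0$. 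Running a Gr\"onwall estimate in the Wiener algebra on $\overline{\mathbb D}$—using the room gained from the radius-$R$ extension to absorb the derivative $z h_t'$—I expect to bound $\sup_{\overline{\mathbb D}}|h_t-1|$ by an explicit increasing function of $\omega_0$ that stays below $1$ exactly on $\omega_0<\alpha/2$. Then $\operatorname{Re}h_t(z)>0$ on $\mathbb D$, i.e.\ $f_t$ is starlike, and starlikeness entails univalence by the classical criterion. The crux, as above, is arranging the two estimates—the majorant for \eqref{eq:coeff-sol} and the Gr\"onwall bound for \eqref{eq:h-evol}—so that convergence past radius $1$ and the inequality $|h_t-1|<1$ hold simultaneously under the single sharp condition $\omega(0,T)<\alpha/2$.
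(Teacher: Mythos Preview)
Your reduction to the triangular coefficient system \eqref{eq:coeff-rec}--\eqref{eq:coeff-sol} is exactly what the paper does, and iterating \eqref{eq:coeff-sol} yields $a_m(t)$ as a sum, over compositions $i_1+\cdots+i_p=m-1$, of the weighted iterated integrals
\[
e^{m x_0(t)}\int_{0\le u_1<\cdots<u_p\le t}e^{-i_1x_0(u_1)}\,dx_{i_1}(u_1)\cdots e^{-i_px_0(u_p)}\,dx_{i_p}(u_p),
\]
each multiplied by a combinatorial weight. But from here your plan misreads the hypothesis in two places.

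First, the $\omega$-control assumption does \emph{not} bound the total variations $\int_0^T|dx_n|$; it bounds precisely the iterated integrals above by $\omega(0,t)^{m-1}/(m-1)!$. So your convolution inequality $\tilde A_m\le\sum k\,\tilde A_k\,\gamma_{m-k}$ with $\gamma_j=e^{j\omega_0}\int_0^T|dx_j|$ is not available; the generating-function ODE for $\Phi$ never gets off the ground. What the paper does instead is apply the control bound term by term to the iterated-integral expansion of $a_m$, together with an elementary estimate on the combinatorial weights, to obtain
\[
|c_{n}(t)|\le \tfrac14\,n(n+1)\,(2\omega_0)^{n}\qquad\bigl(f_t(z)=e^{x_0(t)}\sum_{n\ge0}c_n(t)z^{n+1}\bigr).
\]
This already gives radius of convergence $>1$ whenever $\omega_0<1/2$, so holomorphic extension across $\partial\mathbb D$ holds under a much weaker constraint than $\alpha/2$.

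Second, and consequently, the quartic does not arise from the radius-one threshold at all. It comes from the univalence step, and by a far simpler route than your transport equation for $h_t=zf_t'/f_t$. The paper applies Alexander's classical sufficient condition: a normalised $f(z)=z+\sum_{n\ge2}a_nz^n$ is starlike (hence univalent) on $\mathbb D$ as soon as $\sum_{n\ge2}n|a_n|\le1$. Feeding in the coefficient bound above gives
\[
\sum_{n\ge2}n|c_{n-1}(t)|\le\frac14\sum_{n\ge2}n^2(n-1)(2\omega_0)^{n-1}
=\frac{(2\omega_0)(2\omega_0+2)}{2(1-2\omega_0)^4},
\]
and the inequality $\displaystyle\frac{x(x+2)}{2(1-x)^4}\le1$ with $x=2\omega_0$ is exactly $2x^4-8x^3+11x^2-10x+2\ge0$, whose smallest positive root is $\alpha$. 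That is the entire origin of the threshold. Your Gr\"onwall scheme for $h_t$ might in principle produce \emph{some} smallness condition, but there is no mechanism in it that singles out this particular quartic, and the derivative term $zh_t'$ in \eqref{eq:h-evol} would cost you a factor comparable to the one you are trying to control, so the estimate does not obviously close.
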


The rest of this article is organized as follows:
In Section~\ref{sec:setting}, we recall the basic concepts for our argument.
The assumptions on the driving functions are mentioned in Section~\ref{sec:driver}, and then the definition of a solution to a controlled Loewner--Kufarev equation is given in Section~\ref{sec:LKeq}.
Although these concepts appear in the previous papers~\cite{AF18, AF19}, we summarize them in order to recollect the terminology.
In Section~\ref{sec:main}, we prove Theorem~\ref{thm:main} in four steps, which consist of the uniqueness (Theorem~\ref{thm:uniqueness}), existence (Theorem~\ref{thm:existence}), holomorphic extension (Corollary~\ref{cor:hol_ext}),
univalence
and starlikeness
(Theorem~\ref{thm:univalence}) of the solution.

We use the following notation:
For a continuous function $F \colon [0, T] \to \mathbb{C}$ of bounded variation, the atomless measure $dF$ on $[0, T]$ is defined by the relation $dF((a, b]) = F(b) - F(a)$.
Its total variation is denoted by $\lvert dF \rvert$.

\section{Setting}
\label{sec:setting}

In this section, we describe our setting throughout this paper.

\subsection{Driving functions}
\label{sec:driver}

The \emph{driving functions} $x_0 \colon [0, T] \to \mathbb{R}$ and $x_n \colon [0, T] \to \mathbb{C}$, $n \geq 1$, are continuous functions of bounded variation.
For $\mathbf{x} := (x_1, x_2, \ldots)$, we define the formal power series
\[
\xi(\mathbf{x}, z)_t := \sum_{n=1}^{\infty} x_n(t) z^n
\]
and assume the following:
\begin{enumerate}
\item \label{cond:x0}
$x_0(0) = 0$;
\item \label{cond:xi0}
The series $\xi(\mathbf{x}, z)_0$ has convergence radius one;
\item \label{cond:tot_var}
$\sum_{n=1}^{\infty} \lvert dx_n \rvert ([0, T]) r^n$ converges for all $r \in (0, 1)$.
\end{enumerate}
We note that, for each $z \in \mathbb{D}$, the series $\sum_{n=1}^{\infty} dx_n(t) z^n$ converges with respect to the total variation norm on the space of complex measures on $[0, T]$ from the third condition\footnote{A slightly stronger condition is assumed in \cite[Definition~2.1~(2)]{AF18} to compute the Faber polynomials and Grunsky coefficients. For our purpose, the present condition is sufficient.}.

\begin{lemma}
\label{lem:xi}
Under the assumptions~\eqref{cond:x0}--\eqref{cond:tot_var} above, the series $\xi(\mathbf{x}, z)_t$ has convergence radius one for each $t \in [0, T]$.
The family $(\xi(\mathbf{x}, z)_t)_{0 \leq t \leq T}$ of holomorphic functions on $\mathbb{D}$ is continuous in the topology of locally uniform convergence.
Moreover, the function $t \mapsto \xi(\mathbf{x}, z)_t$ is of bounded variation and satisfies
\begin{equation} \label{eq:d_xi}
d\xi(\mathbf{x}, z)_t = \sum_{n=1}^{\infty} dx_n(t) z^n
\end{equation}
for each $z \in \mathbb{D}$.
\end{lemma}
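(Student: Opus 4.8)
The plan is to derive all three assertions from a single summable majorant. By assumption~\eqref{cond:tot_var} the series $\Phi(r) := \sum_{n=1}^{\infty} \lvert dx_n \rvert([0,T])\, r^{n}$ converges for every $r \in (0,1)$, and for $0 \le s \le t \le T$ one has the elementary bound $\lvert x_n(t) - x_n(s) \rvert = \lvert dx_n((s,t]) \rvert \le \lvert dx_n \rvert((s,t]) \le \lvert dx_n \rvert([0,T])$. I would fix a radius $r \in (0,1)$ and compare every series in sight against $\Phi$ on the closed disk $\{\lvert z \rvert \le r\}$. To see that $\xi(\mathbf{x}, \cdot)_t$ is holomorphic on $\mathbb{D}$, combine $\lvert x_n(t) \rvert \le \lvert x_n(0) \rvert + \lvert dx_n \rvert([0,T])$ with assumption~\eqref{cond:xi0}, which gives $\sum_{n} \lvert x_n(0) \rvert\, r^{n} < \infty$; then $\sum_{n} \lvert x_n(t) \rvert\, r^{n} \le \sum_{n}\lvert x_n(0) \rvert\, r^{n} + \Phi(r) < \infty$ for every $r < 1$, so the radius of convergence is at least one (at $t=0$ it equals one by~\eqref{cond:xi0}). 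This is the first claim, and in particular gives holomorphy on $\mathbb{D}$.

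For continuity in the topology of locally uniform convergence, fix a compact $K \subseteq \mathbb{D}$ and $r < 1$ with $K \subseteq \{\lvert z \rvert \le r\}$. Then
\[
\sup_{z \in K} \lvert \xi(\mathbf{x}, z)_t - \xi(\mathbf{x}, z)_s \rvert \le \sum_{n=1}^{\infty} \lvert x_n(t) - x_n(s) \rvert\, r^{n} \le \sum_{n=1}^{\infty} \lvert dx_n \rvert((s,t])\, r^{n}.
\]
Each $x_n$ is continuous, so $dx_n$ is atomless and $\lvert dx_n \rvert((s,t]) \to 0$ as $s \to t$; since the summands are dominated by the summable sequence $\lvert dx_n \rvert([0,T])\, r^{n}$, dominated convergence forces the right-hand side to $0$. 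Hence $t \mapsto \xi(\mathbf{x}, \cdot)_t$ is continuous, uniformly on $K$.

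Finally, for bounded variation and the differential~\eqref{eq:d_xi}: for any partition $0 = t_0 < \dots < t_m = T$ and $\lvert z \rvert \le r$, interchanging the nonnegative sums gives $\sum_{j} \lvert \xi(\mathbf{x},z)_{t_j} - \xi(\mathbf{x},z)_{t_{j-1}} \rvert \le \sum_{n} \bigl( \sum_{j} \lvert dx_n \rvert((t_{j-1}, t_j]) \bigr) r^{n} = \sum_n \lvert dx_n \rvert([0,T])\, r^n = \Phi(r)$, a bound independent of the partition, so $t \mapsto \xi(\mathbf{x}, z)_t$ has bounded variation. For~\eqref{eq:d_xi} I would invoke the fact recalled before the lemma that $\mu_z := \sum_{n} z^{n}\, dx_n$ converges in the total-variation norm of complex measures on $[0,T]$; since evaluation on a fixed interval is continuous for that norm, $\mu_z((s,t]) = \sum_{n} z^{n}\, dx_n((s,t]) = \sum_{n} z^{n}(x_n(t) - x_n(s)) = \xi(\mathbf{x}, z)_t - \xi(\mathbf{x}, z)_s$, which is exactly~\eqref{eq:d_xi}.

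The genuine obstacle throughout is the interchange of infinite summation with limits and with the measure operations: the dominated-convergence step in the continuity argument and, above all, the identification of $d\xi(\mathbf{x},z)_t$ with the series of measures. The summable majorant $\Phi(r)$ and the total-variation convergence of $\sum_n z^n\, dx_n$ are precisely what license these interchanges; once they are in hand, the three statements reduce to the comparisons above.
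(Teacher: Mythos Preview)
Your argument is correct and follows essentially the same route as the paper: bound $\lvert x_n(t)\rvert$ via $\lvert x_n(0)\rvert + \lvert dx_n\rvert([0,T])$ for the radius, dominate the tail by $\sum_n \lvert dx_n\rvert((s,t])\,r^n$ and apply dominated convergence for continuity, and read off bounded variation and~\eqref{eq:d_xi} from the total-variation convergence of $\sum_n z^n\,dx_n$. The only difference is cosmetic---you package the majorant as $\Phi(r)$ and spell out the partition bound for bounded variation---while the paper is slightly terser.
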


\begin{proof}
For any $t \in [0, T]$ and $r \in (0, 1)$, we have
\begin{align*}
\sum_{n=1}^{\infty} \lvert x_n(t) \rvert r^n &\leq \sum_{n=1}^{\infty} \lvert x_n(t) - x_n(0) \rvert r^n + \sum_{n=1}^{\infty} \lvert x_n(0) \rvert r^n \\
&\leq \sum_{n=1}^{\infty} \lvert dx_n \rvert ((0, t]) r^n + \sum_{n=1}^{\infty} \lvert x_n(0) \rvert r^n < \infty.
\end{align*}
Hence $\xi(\mathbf{x}, z)_t$ has convergence radius one.

Similarly, for $0 \leq s \leq t \leq T$ and $r \in (0, 1)$, we get
\[
\sup_{\lvert z \rvert \leq r} \lvert \xi(\mathbf{x}, z)_{t} - \xi(\mathbf{x}, z)_s \rvert \leq \sum_{n=1}^{\infty} \lvert x_n(t) - x_n(s) \rvert r^n \leq \sum_{n=1}^{\infty} \lvert dx_n \rvert ((s, t]) r^n.
\]
The last expression goes to zero as $\lvert t - s \rvert \to 0$ by the dominated convergence theorem.
Thus, $\xi(\mathbf{x}, z)_t$ is continuous in the topology of locally uniform convergence on $\mathbb{D}$.

Finally, we observe that $\xi(\mathbf{x}, z)_t$ is of bounded variation and has the expression~\eqref{eq:d_xi} from the relation
\[
\xi(\mathbf{x}, z)_{t} - \xi(\mathbf{x}, z)_s = \sum_{n=1}^{\infty} dx_n((s, t]) z^n = \left( \sum_{n=1}^{\infty} dx_n(\mathord{\cdot}) z^n \right) ((s, t]). \qedhere
\]
\end{proof}

\subsection{Controlled Loewner--Kufarev equation}
\label{sec:LKeq}

Let $(f_t)_{0 \leq t \leq T}$ be a family of holomorphic functions on $\mathbb{D}$.
We temporarily assume that
\begin{enumerate}
\item[$\mathrm{(I)}$] $t \mapsto f_t'(z)$ is measurable for each $z \in \mathbb{D}$, and $(f_t')_{0 \leq t \leq T}$ is locally bounded on $\mathbb{D}$.
\end{enumerate}
Under this assumption, let us suppose that the \emph{controlled Loewner--Kufarev equation}
\begin{equation} \label{eq:LKeq}
f_t(z) - z = \int_0^t z f_s'(z) \, \{ dx_0(s) + d\xi(\mathbf{x}, z)_s \}, \quad t \in [0, T], \ z \in \mathbb{D},
\end{equation}
holds for the driving path $(x_0, \mathbf{x})$ in Section~\ref{sec:driver}.
The assumption~$\mathrm{(I)}$ ensures that the last integral is well-defined.
Moreover, by arguing in the same way as in \cite[Remark~2.2~(b)]{AF18}, we see that
\begin{enumerate}
\item[$\mathrm{(II)}$] $(f_t)_{0 \leq t \leq T}$ is continuous with respect to the locally uniform convergence on $\mathbb{D}$.
\end{enumerate}
From Cauchy's integral formula, this property implies that
\begin{enumerate}
\item[$\mathrm{(III)}$] $(f_t')_{0 \leq t \leq T}$ is continuous with respect to the locally uniform convergence on $\mathbb{D}$,
\end{enumerate}
which is obviously a stronger property than $\mathrm{(I)}$.
Thus, as far as the solutions to \eqref{eq:LKeq} are concerned, the conditions $\mathrm{(I)}$--$\mathrm{(III)}$ are mutually equivalent.

Taking the discussion above into account, we say that a family $(f_t)_{0 \leq t \leq T}$ of holomorphic functions on $\mathbb{D}$ is a \emph{(holomorphic) solution} to the controlled Loewner--Kufarev equation driven by $(x_0, \mathbf{x})$ if it is continuous in the topology of locally uniform convergence and satisfies \eqref{eq:LKeq}.

In Section~\ref{sec:main}, except in Theorem~\ref{thm:uniqueness}, we consider a holomorphic solution to the $\omega$-controlled Loewner--Kufarev equation.
The \emph{control function} $\omega \colon \{\, (s,t) \mid 0 \leq s \leq t \leq T \,\} \to \mathbb{R}_{+}$ is a continuous function with super-additivity
\[
0 \leq s \leq t \leq u \leq T \Rightarrow \omega(s, t) + \omega(t, u) \leq \omega(s, u)
\]
and vanishes on the diagonal, i.e., $\omega(t, t) = 0$ (e.g.\ \cite[Section~2.2]{LQ02}).
The driving functions $x_0$ and $\mathbf{x} = (x_n)_{n \geq 1}$ are assumed to be \emph{controlled by $\omega$}
\cite[Definition~3.2]{AF19}:
for any $n \geq 1$, $1 \leq p \leq n$ and integers $i_1, \ldots, i_p \geq 1$
with $i_1 + \cdots + i_p = n$, the inequalities
\[
\left\lvert e^{n x_0(t)} \int_{0 \leq u_1 < \cdots < u_p \leq t}
e^{-i_1 x_0(u_1)} \, dx_{i_1}(u_1) \cdots e^{-i_p x_0(u_p)} \, dx_{i_p}(u_p) \right\rvert
\leq \frac{\omega(0, t)^n}{n!}
\]
and
\begin{align*}
&\left\lvert e^{n x_0(t)} \int_{0 \leq u_1 < \cdots < u_p \leq t}
e^{-i_1 x_0(u_1)} \, dx_{i_1}(u_1) \cdots e^{-i_p x_0(u_p)} \, dx_{i_p}(u_p) \right. \\
&\phantom{=} \left. - e^{n x_0(s)} \int_{0 \leq u_1 < \cdots < u_p \leq s}
e^{-i_1 x_0(u_1)} \, dx_{i_1}(u_1) \cdots e^{-i_p x_0(u_p)} \, dx_{i_p}(u_p) \right\rvert \\
&\leq \omega(s, t) \frac{\omega(0, T)^{n-1}}{(n-1)!}
\end{align*}
hold for any $0 \leq s \leq t \leq T$.

\section{Main results}
\label{sec:main}

\subsection{Existence, uniqueness and holomorphic extension across the unit circle}
\label{sec:hol_ext}

In this subsection, we prove the existence and uniqueness of a solution to the $\omega$-controlled Loewner--Kufarev equation and, as a byproduct, the fact that the solution can be extended holomorphically across $\partial \mathbb{D}$.
We use the setting in Section~\ref{sec:setting}.

Only in the next theorem, the equation is not assumed to be $\omega$-controlled:
\begin{theorem}
\label{thm:uniqueness}
A solution to the controlled Loewner--Kufarev equation driven by $(x_0, \mathbf{x})$ is unique (if it exists).
\end{theorem}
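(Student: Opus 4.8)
The plan is to subtract two hypothetical solutions $(f_t)_{0 \le t \le T}$ and $(g_t)_{0 \le t \le T}$ and to show that their difference $h_t := f_t - g_t$ vanishes identically, by expanding $h_t$ in a Taylor series and exploiting the triangular structure that \eqref{eq:LKeq} induces on the coefficients. Since both solutions satisfy \eqref{eq:LKeq}, putting $t = 0$ gives $f_0(z) = g_0(z) = z$, so $h_0 \equiv 0$, and subtracting the two integral equations yields the linear relation
\begin{equation*}
h_t(z) = \int_0^t z h_s'(z) \, \{ dx_0(s) + d\xi(\mathbf{x}, z)_s \}, \qquad t \in [0, T], \ z \in \mathbb{D}.
\end{equation*}
Writing $h_t(z) = \sum_{k \ge 1} c_k(t) z^k$ (the series has no constant term because the integrand is divisible by $z$), I would read off the coefficient of $z^k$ on both sides.

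First I would justify the term-by-term extraction of coefficients. For a fixed radius $r \in (0, 1)$ and $\lvert z \rvert = r$, the integrand is jointly continuous in $(s, z)$ by property $\mathrm{(III)}$, and the complex measure $dx_0(s) + d\xi(\mathbf{x}, z)_s = \sum_{m \ge 1} dx_m(s) z^m$ in $s$ has total variation bounded uniformly in $z$ by $\lvert dx_0 \rvert((0, t]) + \sum_{m} \lvert dx_m \rvert((0, t]) r^m < \infty$, which is finite by assumption~\eqref{cond:tot_var} and converges in total-variation norm by Lemma~\ref{lem:xi}. Fubini's theorem then permits interchanging the Stieltjes integral in $s$ with the contour integral $\frac{1}{2\pi i} \oint_{\lvert z \rvert = r} (\cdot)\, z^{-k-1}\, dz$ that extracts the coefficient. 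Collecting the contribution of $dx_0$ (which preserves the degree) and of $d\xi$ (which raises it), this gives the scalar Stieltjes system
\begin{equation*}
c_k(t) = \int_0^t \Bigl\{ k\, c_k(s)\, dx_0(s) + \sum_{j=1}^{k-1} j\, c_j(s)\, dx_{k-j}(s) \Bigr\}, \qquad c_k(0) = 0,
\end{equation*}
which is lower triangular: the equation for $c_k$ involves $c_1, \dots, c_{k-1}$ only as an inhomogeneous forcing, together with the diagonal term $k\, c_k\, dx_0$.

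The argument then closes by induction on $k$. For the base case $k = 1$ the forcing is absent and $c_1$ solves the homogeneous equation $c_1(t) = \int_0^t c_1(s)\, dx_0(s)$ with $c_1(0) = 0$; for the inductive step, if $c_1 = \dots = c_{k-1} \equiv 0$ then the forcing vanishes and $c_k$ solves $c_k(t) = \int_0^t k\, c_k(s)\, dx_0(s)$ with $c_k(0) = 0$. In both cases the conclusion $c_k \equiv 0$ follows from a Gronwall inequality for Stieltjes integrals against the finite positive \emph{atomless} measure $\mu := k\, \lvert dx_0 \rvert$: iterating $\lvert c_k(t) \rvert \le \int_0^t \lvert c_k(s) \rvert\, d\mu(s)$ produces $\lvert c_k(t) \rvert \le \lVert c_k \rVert_\infty\, \mu((0, t])^n / n!$ for every $n$ (the ordered $n$-fold integral of an atomless measure over the simplex equals $\mu((0, t])^n / n!$), and letting $n \to \infty$ forces $c_k \equiv 0$. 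Here $\lVert c_k \rVert_\infty < \infty$ because $c_k$ is continuous on $[0, T]$ by property $\mathrm{(II)}$. Hence every Taylor coefficient of $h_t$ vanishes, so $h_t \equiv 0$ and the solution is unique.

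I expect the main technical point to be the justification of the coefficient extraction, that is, the interchange of the Stieltjes integral with the coefficient functional, for which the total-variation convergence supplied by Lemma~\ref{lem:xi} and the uniform bound from assumption~\eqref{cond:tot_var} are exactly what is needed; once the triangular system is in hand, the atomlessness of the driving measures makes the Gronwall step routine. Notably, this route avoids any loss-of-derivative difficulty that a direct sup-norm estimate on $h_t$ would incur through the factor $h_s'$.
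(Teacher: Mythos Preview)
Your proof is correct and follows essentially the same route as the paper: both arguments pass to Taylor coefficients, obtain a lower-triangular system of scalar Stieltjes equations, and conclude by the Gronwall-type uniqueness for $c(t)=\int_0^t k\,c(s)\,dx_0(s)$ (the paper cites \cite[Proposition~0.4.7]{RY99} for this). The only cosmetic difference is that you subtract the two solutions before expanding, whereas the paper expands a single solution as $f_t(z)=C(t)(z+c_1(t)z^2+\cdots)$, first handles the leading factor $C(t)$, and then shows each $c_n$ is uniquely determined; your version sidesteps the $C(t)$ factorization but is otherwise the same induction.
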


\begin{proof}
Let $(f_t)_{0 \leq t \leq T}$ be a solution.
As it is continuous and $f_0'(z) = 1$, the quantity $C(t) := f_t'(0)$ is non-zero up to a certain time $\tilde{T} \in (0, T]$.
We can write the Taylor expansion of $f_t$, $t \in [0, \tilde{T})$, around the origin as
\begin{equation} \label{eq:Taylor}
f_t(z) = C(t) (z + c_1(t) z^2 + c_2(t) z^3 + \cdots).
\end{equation}
The coefficients $C(t)$ and $c_n(t)$, $n \geq 1$, are all continuous functions owing to Cauchy's integral formula.
By substituting this expression into \eqref{eq:LKeq}, we have the recurrence relations
\begin{equation} \label{eq:C}
C(t) - 1 = \int_0^t C(s) \, dx_0(s)
\end{equation}
and
\begin{equation} \label{eq:c_n}
c_n(t) = \int_0^t \left\{ \sum_{k=0}^{n-1} (k + 1) c_k(s) \, dx_{n-k}(s) + n c_n(s) \, dx_0(s) \right\}
\end{equation}
for $t \in [0, \tilde{T})$.
Here, we put $c_0(t) := 1$.
The equations~\eqref{eq:C} and \eqref{eq:c_n} are exactly those in \cite[Proposition~2.6]{AF18}.
It follows from the usual iteration method (see e.g.\ \cite[Proposition~0.4.7]{RY99}) that a continuous function $C(t)$ that satisfies \eqref{eq:C} exists uniquely and is given by $C(t) = e^{x_0(t) - x_0(0)}$.
Hence $C(t)$ is non-zero for all $t \in [0, T]$, and \eqref{eq:Taylor}--\eqref{eq:c_n} are valid for all $t$.

Now, we prove the uniqueness of the continuous functions $c_n(t)$, $n \geq 1$, which satisfy \eqref{eq:c_n} by induction.
Let $n \geq 2$ and assume that $c_k(t)$, $1 \leq k \leq n-1$, are unique.
Suppose that there are two continuous functions $c_{1, n}(t)$ and $c_{2, n}(t)$ both satisfying \eqref{eq:c_n}.
Then by taking their difference, we obtain the equation
\[
c_{1, n}(t) - c_{2, n}(t) = n \int_0^t (c_{1, n}(s) - c_{2, n}(s)) \, dx_0(s),
\]
which has a unique solution $c_{1, n}(t) - c_{2, n}(t) \equiv 0$ by \cite[Proposition~0.4.7]{RY99}.
Hence $c_n(t)$ is unique.
The initial case $n=1$ is proved in the same way.

In this way, we have proved the uniqueness of all coefficients $C(t)$ and $c_n(t)$, $n \geq 1$, which implies that of $(f_t)_{0 \leq t \leq T}$.
\end{proof}

The following theorem and lemma are essentially
(but implicitly)
established in
\cite[Corollary~4.4 and Appendix~A.1]{AF19}.
However, for the sake of completeness and the readers' convenience,
we present them with detailed proofs.

\begin{theorem} \label{thm:existence}
Let $\omega$ be a control function with
$
\omega(0, T) <
1/2
$.
Then there exists a solution to the $\omega$-controlled Loewner--Kufarev equation.
\end{theorem}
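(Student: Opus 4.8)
The plan is to construct the solution coefficient by coefficient, exactly as in the uniqueness argument of Theorem~\ref{thm:uniqueness}, and then to estimate the coefficients by means of the $\omega$-control hypothesis. First I would solve the recurrences \eqref{eq:C} and \eqref{eq:c_n}: the iteration method (\cite[Proposition~0.4.7]{RY99}) yields $C(t) = e^{x_0(t)}$ from \eqref{eq:C} (using $x_0(0) = 0$), and, inductively, a unique continuous $c_n(t)$ from \eqref{eq:c_n} once $c_0, \ldots, c_{n-1}$ are known, since for fixed $n$ the relation \eqref{eq:c_n} is a linear Stieltjes equation $c_n(t) = h_n(t) + n \int_0^t c_n(s)\, dx_0(s)$ with a known continuous integrand $h_n$. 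Assembling the candidate solution $f_t(z) := C(t) \sum_{n \ge 0} c_n(t) z^{n+1}$ (with $c_0 \equiv 1$), it then remains to show that its radius of convergence is at least $1$, that it depends continuously on $t$, and that it genuinely satisfies \eqref{eq:LKeq}.

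The heart of the matter is an estimate on $c_n(t)$ in terms of $\omega$. To expose the iterated integrals of the control condition, I would absorb the $dx_0$-drift by writing $c_n(t) = e^{n x_0(t)} a_n(t)$; since $d(e^{n x_0}) = n e^{n x_0}\, dx_0$ for continuous functions of bounded variation, \eqref{eq:c_n} becomes
\begin{equation} \label{eq:reduced-rec}
a_n(t) = \int_0^t \sum_{k=0}^{n-1} (k+1)\, e^{-(n-k) x_0(s)}\, a_k(s)\, dx_{n-k}(s), \qquad a_0 \equiv 1,\ a_n(0) = 0\ (n \ge 1).
\end{equation}
Iterating \eqref{eq:reduced-rec} expresses $a_n(t)$ as a finite sum, over all compositions $(i_1, \ldots, i_p)$ of $n$, of the iterated integrals
\[
I_{(i_1, \ldots, i_p)}(t) = \int_{0 \le u_1 < \cdots < u_p \le t} e^{-i_1 x_0(u_1)}\, dx_{i_1}(u_1) \cdots e^{-i_p x_0(u_p)}\, dx_{i_p}(u_p),
\]
weighted by $w(i_1, \ldots, i_p) = \prod_{m=1}^{p} (1 + i_1 + \cdots + i_{m-1})$, the $m$-th factor being the value of $k+1$ when the letter $i_m$ (the largest-time letter of the current sub-integral) is peeled off. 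These $I_{(i_1, \ldots, i_p)}$ are precisely the integrals appearing in the $\omega$-control condition, and $e^{n x_0(t)} I_{(i_1, \ldots, i_p)}(t)$ is bounded there by $\omega(0, t)^n / n!$ since $i_1 + \cdots + i_p = n$. The triangle inequality thus gives $\lvert c_n(t) \rvert \le (\omega(0, t)^n / n!) \sum_{(i_1, \ldots, i_p)} w(i_1, \ldots, i_p)$. Indexing compositions of $n$ by their sets $S \subseteq \{1, \ldots, n-1\}$ of proper partial sums collapses the weight sum to $\prod_{s=1}^{n-1} (1 + (1 + s)) = \prod_{s=1}^{n-1} (s + 2) = (n+1)!/2$, so that $\lvert c_n(t) \rvert \le \tfrac{1}{2}(n+1)\, \omega(0, t)^n$. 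Because $\omega(0, t) \le \omega(0, T) < 1/2$, the radius of convergence of $\sum_n c_n(t) z^{n+1}$ exceeds $2$; in particular $f_t$ is a well-defined holomorphic function on $\mathbb{D}$ (indeed on a neighbourhood of $\overline{\mathbb{D}}$) for each $t$.

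Time-continuity follows from the second control inequality, which bounds the increment of each $e^{n x_0(\cdot)} I_{(i_1, \ldots, i_p)}$ between $s$ and $t$ by $\omega(s, t)\, \omega(0, T)^{n-1}/(n-1)!$; summing the same weights yields $\lvert c_n(t) - c_n(s) \rvert \le \tfrac{1}{2} n(n+1)\, \omega(s, t)\, \omega(0, T)^{n-1}$, so that for $r < 1/\omega(0, T)$ the series $\sum_n \lvert c_n(t) - c_n(s) \rvert r^{n+1}$ is at most a constant multiple of $\omega(s, t)$. Together with the continuity of $C(t) = e^{x_0(t)}$, this gives $\sup_{\lvert z \rvert \le r} \lvert f_t(z) - f_s(z) \rvert \to 0$ as $s \to t$, i.e.\ $(f_t)_{0 \le t \le T}$ is continuous in the topology of locally uniform convergence. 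To verify \eqref{eq:LKeq}, I would observe that for $\lvert z \rvert < 1$ the integrand $z f_s'(z)\{dx_0(s) + d\xi(\mathbf{x}, z)_s\}$ is, for each $s$, a power series in $z$ whose coefficients are bounded uniformly in $s \in [0, T]$; using condition~\eqref{cond:tot_var} and Lemma~\ref{lem:xi} to control the measures $dx_m$, a Fubini-type argument permits interchanging $\int_0^t$ with the extraction of the $z^{N+1}$-coefficient. The resulting identities are exactly \eqref{eq:C} for $N = 0$ and \eqref{eq:c_n} for $N \ge 1$, which hold by construction, so $f_t$ and the right-hand side of \eqref{eq:LKeq} share all Taylor coefficients and coincide on $\mathbb{D}$.

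I expect the main obstacle to be the combinatorial step: identifying $a_n$ as the weighted sum of precisely those iterated integrals that the $\omega$-control condition estimates, and evaluating the total weight as $(n+1)!/2$. It is this algebra that converts the abstract control hypothesis into the concrete geometric bound $\lvert c_n(t) \rvert \le \tfrac{1}{2}(n+1)\, \omega(0, t)^n$; once that is in hand, the radius-of-convergence estimate, the modulus of continuity, and the Fubini interchange confirming \eqref{eq:LKeq} are all routine.
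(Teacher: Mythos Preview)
Your proposal is correct and follows the same architecture as the paper: build the coefficients from the recurrences \eqref{eq:C}--\eqref{eq:c_n}, represent $c_n(t)$ as a weighted sum of the iterated integrals $e^{nx_0(t)}I_{(i_1,\ldots,i_p)}(t)$ (the paper imports this representation from \cite[Theorem~2.8]{AF18}, while you rederive it via the substitution $c_n=e^{nx_0}a_n$), and then invoke the first $\omega$-control inequality to bound $|c_n(t)|$.

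The genuine difference is in the combinatorics. The paper (Lemma~\ref{eq:estimate}) bounds each weight crudely by $n\binom{n-1}{p}p!\le n\,2^{n-1}p!$ and the number of compositions by $\binom{n-1}{p-1}$, arriving at
\[
|c_n(t)|\le \tfrac14\,n(n+1)\,(2\omega(0,T))^n ,
\]
which gives radius of convergence $>1$. You instead evaluate the total weight exactly: indexing compositions by their set $S\subseteq\{1,\ldots,n-1\}$ of proper partial sums turns $\sum w$ into $\prod_{s=1}^{n-1}(s+2)=(n+1)!/2$, yielding the sharper
\[
|c_n(t)|\le \tfrac12\,(n+1)\,\omega(0,t)^n ,
\]
hence radius $>1/\omega(0,T)>2$. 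Your argument is cleaner and strictly stronger (no $2^n$ loss), though both suffice for the stated theorem. You are also more explicit than the paper on two points the proof there leaves implicit: the time-continuity of $(f_t)$ (which you obtain from the second $\omega$-control inequality with the same weight sum) and the Fubini interchange needed to pass from the coefficient recurrences back to \eqref{eq:LKeq}.
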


\begin{proof}
We put $C(t) = e^{x_0(t) - x_0(0)}$ and define $c_n(t)$, $n \geq 1$, by the relation given in \cite[Theorem~2.8]{AF18}.
They are a (unique) solution to the system of equations~\eqref{eq:C} and \eqref{eq:c_n}.
If the series $\sum_{n=1}^{\infty} c_{n-1}(t) z^n$ has convergence radius not less than one, then by reversing the direction of the proof of Theorem~\ref{thm:uniqueness}, the family $(f_t)_t$ defined by \eqref{eq:Taylor} is shown to be a solution to the controlled Loewner--Kufarev equation.

Now by Lemma~\ref{eq:estimate} below,
we
easily see from \eqref{eq:estimate} that $\sum_{n=1}^{\infty} c_{n-1}(t) z^n$ has convergence radius greater than one if
$
\omega(0, T) <
1/2
$.
\end{proof}

Keeping the present notation and the assumption
for $( f_{t} )_{t}$ being a solution to the
$\omega$-controlled Loewner-Kufarev equation,
we have

\begin{lemma} %%%%%%%%%%%%%%%%%%%%%%%%%%
\label{eq:estimate}
$
\lvert c_n(t) \rvert
\leq
4^{-1}
n (n+1)
( 2 \omega (0,T) )^{n}
$.
\end{lemma}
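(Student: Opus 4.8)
The plan is to start from the closed-form expression for the Taylor coefficients $c_n(t)$ recorded in \cite[Theorem~2.8]{AF18}, since that is precisely the object the lemma refers to. Concretely, one solves the recurrence \eqref{eq:c_n} by reading it as a scalar linear equation driven by $dx_0$: with integrating factor $e^{n x_0(t)}$ (recall $x_0(0)=0$, so $C(t)=e^{x_0(t)}$ and $c_k(s)=e^{k x_0(s)}I_k(s)$ for suitable $I_k$) the inhomogeneous part $\sum_{k<n}(k+1)c_k\,dx_{n-k}$ can be integrated, and unfolding the recursion completely yields a sum over all compositions $n=i_1+\cdots+i_p$ ($p\ge1$, $i_j\ge1$):
\[
c_n(t)=\sum_{p=1}^{n}\ \sum_{\substack{i_1,\ldots,i_p\ge1\\ i_1+\cdots+i_p=n}} \Bigl(\prod_{j=1}^{p}(k_{j-1}+1)\Bigr)\, e^{n x_0(t)}\int_{0\le u_1<\cdots<u_p\le t} e^{-i_1 x_0(u_1)}\,dx_{i_1}(u_1)\cdots e^{-i_p x_0(u_p)}\,dx_{i_p}(u_p),
\]
where $k_{j-1}:=i_1+\cdots+i_{j-1}$ (so $k_0=0$). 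The point is that the factors $e^{\pm m x_0}$ telescope exactly into the weights $e^{-i_j x_0(u_j)}$ inside the integral, so every summand has precisely the shape that the hypothesis controls.

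With this representation in hand, I would apply the first $\omega$-control inequality termwise: for each composition the corresponding iterated integral, together with its prefactor $e^{n x_0(t)}$, is bounded in modulus by $\omega(0,t)^n/n!$, and by super-additivity together with $\omega\ge0$ one has $\omega(0,t)\le\omega(0,T)$. Hence
\[
\lvert c_n(t)\rvert\le \frac{\omega(0,T)^n}{n!}\, S_n,\qquad S_n:=\sum_{p=1}^{n}\ \sum_{\substack{i_1+\cdots+i_p=n\\ i_j\ge1}}\ \prod_{j=1}^{p}(k_{j-1}+1),
\]
so the problem reduces to estimating the purely combinatorial weight sum $S_n$.

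The final step is to control $S_n$. Splitting off the last part $i_p$ of a composition identifies the innermost factor $(k_{p-1}+1)$ with $(n-i_p+1)$ and the remaining product with a composition of $n-i_p$, giving the recurrence $S_n=\sum_{k=0}^{n-1}(k+1)S_k$ with $S_0=1$; subtracting consecutive terms telescopes this to $S_{n+1}=(n+2)S_n$, whence $S_n=(n+1)!/2$ for $n\ge1$. Substituting this back produces the clean estimate $\lvert c_n(t)\rvert\le\tfrac12(n+1)\,\omega(0,T)^n$, and since $\tfrac12(n+1)\le\tfrac14 n(n+1)2^n$ for every $n\ge1$, the asserted bound $\lvert c_n(t)\rvert\le\tfrac14 n(n+1)(2\omega(0,T))^n$ follows a fortiori.

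I expect the main obstacle to lie in the first step rather than in the estimation: one must carefully justify the composition-sum representation of $c_n$ and verify that the telescoping of the exponential factors reproduces \emph{exactly} the integrand of the $\omega$-control hypothesis, so that the bound $\omega(0,t)^n/n!$ applies to each term. Once the representation is pinned down, the $\omega$-control inequality and the elementary recurrence for $S_n$ make the rest routine; note in particular that evaluating $S_n$ exactly yields a bound strictly stronger than the one stated, so the factor $(2\omega(0,T))^n$ in the lemma leaves considerable slack.
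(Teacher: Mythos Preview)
Your argument is correct and follows the same overall framework as the paper: start from the composition-sum representation of $c_n(t)$ given in \cite[Theorem~2.8]{AF18}, apply the first $\omega$-control inequality termwise to bound each iterated integral by $\omega(0,t)^n/n!\le\omega(0,T)^n/n!$, and then estimate the remaining combinatorial weight sum. The ``obstacle'' you anticipate in the first step is in fact already disposed of by that citation; there is no need to re-derive the telescoping of the exponential factors, and your weight $\prod_{j=1}^{p}(k_{j-1}+1)$ coincides with the paper's $\widetilde w(n)_{i_1,\ldots,i_p}$ after reversing the composition $(i_1,\ldots,i_p)\mapsto(i_p,\ldots,i_1)$.

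Where you differ from the paper is in the combinatorial step, and your treatment is sharper. The paper bounds each individual weight by $\widetilde w(n)_{i_1,\ldots,i_p}\le n\binom{n-1}{p}p!\le n\,2^{n-1}p!$, then counts the $\binom{n-1}{p-1}$ compositions of $n$ into $p$ parts and sums over $p$, arriving at $S_n\le \tfrac14 n(n+1)2^n\cdot n!$ and hence exactly the stated bound. You instead evaluate $S_n$ exactly via the recurrence $S_{n+1}=(n+2)S_n$, obtaining $S_n=(n+1)!/2$ and thus $\lvert c_n(t)\rvert\le\tfrac12(n+1)\,\omega(0,T)^n$, which you then weaken to the lemma's inequality. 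Your exact evaluation is cleaner and yields a strictly stronger coefficient estimate; feeding it into the Alexander criterion in Theorem~\ref{thm:univalence} would in fact raise the admissible threshold on $\omega(0,T)$ beyond $\alpha/2$. The paper's cruder bound, on the other hand, gets to the stated inequality directly without the final weakening step.
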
 %%%%%%%%%%%%%%%%%%%%%%%%%%%%
\begin{proof} %%%%%%%%%%%%%%%%%%%%%%%%%%

By \cite[Theorem~2.8]{AF18},
\begin{equation*}
\begin{split}
\vert c_{n}(t) \vert
\leq
\sum_{p=1}^{n}
\sum_{\substack{
	i_{1}, \cdots , i_{p} \in \mathbb{N}: \\
	i_{1} + \cdots + i_{p} = n
}}
\widetilde{w} (n)_{ i_{1}, \cdots , i_{p} }
\frac{
	\omega (0,t)^{n}
}{
	n!
},
\end{split}
\end{equation*}
where
$
\widetilde{w} (n)_{ i_{1}, \cdots , i_{p} }
:=
\{ ( n - i_{1} ) + 1 \}
\{ ( n - ( i_{1} + i_{2} ) ) + 1 \}
\cdots
\{ ( n - ( i_{1} + \cdots + i_{p-1} ) ) + 1 \}
\leq
n
(n-1)
\cdots
(n-p)
=
n \binom{n-1}{p} p!
\leq
n 2^{n-1} p!
$.
Therefore we have
\begin{equation*}
\begin{split}
\vert c_{n}(t) \vert
& %%%%%%%%%%%%%%%%%%
\leqslant
n 2^{n-1}
\frac{
	\omega (0,t)^{n}
}{
	n!
}
\sum_{p=1}^{n}
p!
\sum_{\substack{
	i_{1}, \cdots , i_{p} \in \mathbb{N}: \\
	i_{1} + \cdots + i_{p} = n
}}
1
= %%%%%%%%%%%%%%%%%
2^{n-1}
\omega (0,t)^{n}
\left(
	n
	\frac{ 1 }{ n! }
	\sum_{p=1}^{n}
	p!
	\binom{n-1}{p-1}
\right) .
\end{split}
\end{equation*}
The last factor on the right-hand side
can be simply estimated as
\begin{equation*}
n
\frac{ 1 }{ n! }
\sum_{p=1}^{n}
p!
\binom{n-1}{p-1}
=
\sum_{p=1}^{n} \frac{ p }{ (n-p)! }
\leq
\sum_{p=1}^{n} p
=
\frac{ n (n+1) }{ 2 },
\end{equation*}
so that we have
$
\vert c_{n} (t) \vert
\leq
n (n+1) 2^{n-2} \omega (0,t)^{n}
=
4^{-1}
n (n+1)
( 2\omega (0,T) )^{n}
$.

\end{proof} %%%%%%%%%%%%%%%%%%%%%%%%%%%%

In the last line of the proof of Theorem~\ref{thm:existence}, the Taylor series~\eqref{eq:Taylor} of $f_t$ has convergence radius strictly greater than one.
Thus, we obtain the following corollary:

\begin{corollary} \label{cor:hol_ext}
Let $\omega$ be a control function with
$
\omega(0, T) <
1/2
$
and $(f_t)_{0 \leq t \leq T}$ a unique solution to the $\omega$-controlled Loewner--Kufarev equation.
Then for each $t \in [0, T]$, the function $f_t$ can be extended holomorphically to an open neighbourhood of $\overline{\mathbb{D}}$.
\end{corollary}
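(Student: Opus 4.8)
The plan is to read off the radius of convergence of the Taylor series of $f_t$ directly from the coefficient bound of Lemma~\ref{eq:estimate} and to check that it exceeds one under the hypothesis $\omega(0, T) < 1/2$. Recall from the proof of Theorem~\ref{thm:uniqueness} that the unique solution has the expansion
\begin{equation*}
f_t(z) = C(t) \sum_{n=0}^{\infty} c_n(t)\, z^{n+1}, \qquad c_0(t) = 1, \quad C(t) = e^{x_0(t) - x_0(0)} \neq 0,
\end{equation*}
valid for every $t \in [0, T]$. Since multiplication by the nonzero constant $C(t)$ leaves the radius of convergence unchanged, it suffices to control that of $\sum_{n \geq 0} c_n(t)\, z^{n+1}$.

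First I would apply the Cauchy--Hadamard formula. Rewriting the series as $\sum_{m \geq 1} c_{m-1}(t)\, z^m$, its radius of convergence $R = R(t)$ obeys $1/R = \limsup_{m \to \infty} \lvert c_{m-1}(t) \rvert^{1/m}$. Substituting the estimate $\lvert c_n(t) \rvert \leq 4^{-1} n(n+1)\,(2\omega(0, T))^n$ from Lemma~\ref{eq:estimate} yields
\begin{equation*}
\lvert c_{m-1}(t) \rvert^{1/m} \leq \left( \tfrac{1}{4}(m-1)m \right)^{1/m} \bigl( 2\omega(0, T) \bigr)^{(m-1)/m}.
\end{equation*}
The polynomial prefactor converges to $1$ as $m \to \infty$, so $1/R \leq 2\omega(0, T)$, i.e.\ $R \geq 1/(2\omega(0, T))$. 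The hypothesis $\omega(0, T) < 1/2$ then forces $R > 1$.

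To conclude, a power series defines a holomorphic function on the open disk of radius $R$ about the origin; as $R > 1$, this disk is an open neighbourhood of $\overline{\mathbb{D}}$, and its sum agrees with $f_t$ on $\mathbb{D}$ by uniqueness of Taylor coefficients. Hence $f_t$ extends holomorphically across $\partial \mathbb{D}$. I expect no genuine obstacle here: the whole argument is an immediate consequence of Lemma~\ref{eq:estimate}, and the only points deserving a moment's care are the index shift between $c_n$ and the coefficient of $z^{n+1}$ and the routine fact that the polynomial factor $(m-1)m$ does not affect the root-test limit.
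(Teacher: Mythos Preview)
Your proof is correct and follows essentially the same route as the paper: the corollary is deduced directly from the coefficient bound of Lemma~\ref{eq:estimate}, which forces the Taylor series of $f_t$ to have radius of convergence at least $1/(2\omega(0,T)) > 1$. The paper merely remarks that this was already observed in the last line of the proof of Theorem~\ref{thm:existence}, while you spell out the Cauchy--Hadamard computation explicitly; the substance is the same.
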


\subsection{Univalence and starlikeness on the unit disk}
\label{sec:univalence}

In this subsection, we prove that the solution to the $\omega$-controlled Loewner--Kufarev equation is univalent and starlike.
Let $\alpha$ be the smallest real solution to the quartic equation
$
2x^{4} - 8x^{3} + 11 x^{2} - 10 x + 2 = 0
$.
Note that
$
\alpha/2 \approx 0.13105 \cdots \in ( \frac{1}{8},\frac{1}{7} )
$.

\begin{theorem}
\label{thm:univalence}
Let $\omega$ be a control function with
$
\omega(0, T) \leq
\alpha/2
$
and $(f_t)_{0 \leq t \leq T}$ a unique solution to the $\omega$-controlled Loewner--Kufarev equation.
Then the function $f_t$ is univalent
and starlike
on $\mathbb{D}$ for each $t \in [0, T]$.
\end{theorem}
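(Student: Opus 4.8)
The plan is to deduce univalence from starlikeness and to obtain starlikeness from a coefficient estimate supplied by Lemma~\ref{eq:estimate}. Since $f_t(z) = C(t)\,g_t(z)$ with $C(t) = e^{x_0(t)-x_0(0)} \neq 0$ and $g_t(z) := z + \sum_{n\geq 1} c_n(t)\,z^{n+1}$, and since multiplication by a nonzero constant affects neither univalence nor starlikeness with respect to the origin, it suffices to show that each $g_t$ is starlike on $\mathbb{D}$. I would base this on the classical sufficient condition: if $g(z) = z + \sum_{m\geq 2} a_m z^m$ is holomorphic on $\mathbb{D}$ and $\sum_{m\geq 2} m\lvert a_m\rvert \leq 1$, then $\operatorname{Re}\bigl(z g'(z)/g(z)\bigr) > 0$ on $\mathbb{D}$, so $g$ is starlike and in particular univalent. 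First I would prove this criterion directly: writing $zg'(z)/g(z) - 1 = \bigl(\sum_{m\geq 2}(m-1)a_m z^{m-1}\bigr)/\bigl(1 + \sum_{m\geq 2} a_m z^{m-1}\bigr)$ and using $\lvert z\rvert^{m-1} \leq \lvert z\rvert$ for $m \geq 2$, one bounds $\sum_{m\geq 2} m\lvert a_m\rvert \lvert z\rvert^{m-1} \leq \lvert z\rvert < 1$, which forces the modulus of the quotient to be strictly less than $1$ and hence the real part of $zg'(z)/g(z)$ to be positive.

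The substantive step is then to verify the hypothesis $\sum_{m\geq 2} m\lvert a_m\rvert \leq 1$ for $g_t$, that is, $\sum_{n\geq 1}(n+1)\lvert c_n(t)\rvert \leq 1$. Setting $r := 2\omega(0,T)$ and inserting the bound $\lvert c_n(t)\rvert \leq \tfrac14 n(n+1)r^n$ from Lemma~\ref{eq:estimate}, this reduces to $\tfrac14\sum_{n\geq 1} n(n+1)^2 r^n \leq 1$. I would evaluate the power series in closed form, obtaining
\[
\sum_{n\geq 1} n(n+1)^2 r^n = \frac{2r(2+r)}{(1-r)^4},
\]
so the required inequality becomes $r(2+r) \leq 2(1-r)^4$, which rearranges precisely to $2r^4 - 8r^3 + 11r^2 - 10r + 2 \geq 0$.

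To conclude, I would use the definition of $\alpha$. The quartic $q(x) := 2x^4 - 8x^3 + 11x^2 - 10x + 2$ satisfies $q(0) = 2 > 0$, and $\alpha > 0$ is its smallest real root, so $q \geq 0$ throughout $[0, \alpha]$. The hypothesis $\omega(0,T) \leq \alpha/2$ gives $r = 2\omega(0,T) \in [0, \alpha]$ (in particular $r < 1$, so all the series above converge absolutely), whence $q(r) \geq 0$. This is exactly the inequality obtained in the previous paragraph, so $\sum_{n\geq 1}(n+1)\lvert c_n(t)\rvert \leq 1$ for every $t \in [0,T]$, and the starlikeness criterion yields the assertion for $g_t$ and therefore for $f_t$.

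I expect the only real obstacle to be the algebraic identity for $\sum_{n\geq 1} n(n+1)^2 r^n$ and its rearrangement, where the striking point is that the resulting threshold condition is governed by exactly the quartic that defines $\alpha$; this is what pins down $\alpha/2$ as the sharp bound for this method. The starlikeness criterion itself is standard, but I would record its short proof for completeness.
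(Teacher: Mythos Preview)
Your proposal is correct and follows essentially the same route as the paper: reduce to the normalized function $g_t(z)=z+\sum_{n\ge 1}c_n(t)z^{n+1}$, apply Alexander's coefficient criterion $\sum_{m\ge 2} m|a_m|\le 1$ via the bound of Lemma~\ref{eq:estimate}, evaluate $\sum_{n\ge 1} n(n+1)^2 r^n = 2r(2+r)/(1-r)^4$, and observe that the resulting inequality is precisely the defining quartic for $\alpha$. The only notable difference is that you include a short direct proof of the criterion (and spell out why $q\ge 0$ on $[0,\alpha]$), whereas the paper simply cites Alexander; your added detail is fine and the argument is otherwise identical.
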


\begin{proof}
From Section~9 of Alexander~\cite{Al18}, a normalized holomorphic function $f(z) = z + a_2 z^2 + a_3 z^3 + \cdots$ is univalent
and starlike
on $\mathbb{D}$ if $\sum_{n=2}^{\infty} n \lvert a_n \rvert \leq 1$.
(Readers can also find this fact in
Example~2.2 in Chapter~2 of Pommerenke~\cite{Po75} or in Exercise~24 in Chapter~2 of Duren~\cite{Du83} for instance.)
We apply this sufficient condition to the series $z + \sum_{n=2}^{\infty} c_{n-1}(t) z^n$. Here, the $c_n(t)$'s are defined as in the proof of Theorem~\ref{thm:existence}.
In this case, the sufficient condition for the univalence is
\begin{equation} \label{eq:univalence}
\sum_{n=2}^{\infty} n \lvert c_{n-1}(t) \rvert \leq 1.
\end{equation}
Noticing that
$2 \omega (0,T) \leq \alpha < 1$
and by using Lemma~\ref{eq:estimate}, the
left-hand side of this inequality is estimated as follows:
\begin{equation*}
\sum_{n=2}^{\infty} n \lvert c_{n-1}(t) \rvert
\leq
\frac{1}{4} \sum_{n=2}^{\infty} n^{2} (n-1) (2 \omega(0, T))^{n-1}
=
\frac{1}{4}
\cdot
\frac{
	2 ( 2 \omega(0, T) ) ( 2 \omega(0, T) + 2 )
}{
	( 2 \omega(0, T) - 1 )^{4}
}.
\end{equation*}
We can easily check that the last fraction is not greater than one
if and only if
$
\omega(0, T) \leq
\alpha/2
$.
Hence \eqref{eq:univalence} holds under the present assumption.
\end{proof}

Combining Theorems~\ref{thm:uniqueness}, \ref{thm:existence}, \ref{thm:univalence} and Corollary~\ref{cor:hol_ext} yields our goal Theorem~\ref{thm:main}.

\section*{Acknowledgements}
We thank for the constructive comments we received and which helped us to improve and strengthen our presentation.
The third author was supported by JSPS KAKENHI Grant Number JP19J13031.

\end{document}